\newcommand{\E}{\mathbb{E}}
\DeclareMathOperator{\card}{card}
\subjclass{91C20, 91D25, 94C15, 91A10, 91D30}
\keywords{(n,k) game, heterogeneous agents, spin model, threshold model}
\title{The (n,k) game with heterogeneous agents}
\author{Hsin-Lun Li}
\date{}
\email{hsinlunl@math.nsysu.edu.tw}
\theoremstyle{definition}
\newtheorem{theorem}{Theorem}
\newtheorem{lemma}[theorem]{Lemma}
\begin{document}

\allowdisplaybreaks

\thispagestyle{firstpage}
\maketitle
\begin{center}
    Hsin-Lun Li
    \centerline{$^1$National Sun Yat-sen University, Kaohsiung 804, Taiwan}
\end{center}
\medskip

\begin{abstract}
    The \((n,k)\) game models a group of \(n\) individuals with binary opinions, say 1 and 0, where a decision is made if at least \(k\) individuals hold opinion 1. This paper explores the dynamics of the game with heterogeneous agents under both synchronous and asynchronous settings. We consider various agent types, including consentors, who always hold opinion 1, rejectors, who consistently hold opinion 0, random followers, who imitate one of their social neighbors at random, and majority followers, who adopt the majority opinion among their social neighbors. We investigate the likelihood of a decision being made in finite time. In circumstances where a decision cannot almost surely be made in finite time, we derive a nontrivial bound to offer insight into the probability of a decision being made in finite time.

\end{abstract}

\section{Introduction}

The \((n, k)\) game states that in a collection of \(n\) agents with binary opinions, such as agree/disagree or option A/option B, denoted as 1 and 0, a decision is made if at least \(k\) individuals hold opinion 1. Interpreting this mathematically, let \([n] = \{1, \ldots, n\}\) represent a collection of \(n\) agents, and let \(x_i(t)\) be the opinion of agent \(i\) at time \(t\). A decision is made if \(\sum_{i \in [n]} x_i(t) \geq k\) for some \(t \geq 0\). Some realistic examples of the \((n, k)\) game exist. For instance, a bill in a legislative body may go through several rounds until the proportion of individuals consenting reaches a specified threshold, such as \( 1/2 \) or \( 2/3 \). The threshold corresponds to $k/n$. There are quite a few spin models~\cite{castellano2009statistical}. One of the popular ones is the voter model~\cite{clifford1973model,holley1975ergodic}. In the voter model, an agent is uniformly selected to update its opinion by emulating one of its social neighbors, chosen uniformly at random. The threshold \( k \) in the \( (n, k) \) game differs from the threshold in the threshold voter model. In the threshold voter model, an agent adopts the opposing opinion if the number of its social neighbors holding the opposing opinion reaches the threshold~\cite{andjel1992clustering, cox1991nonlinear, durrett1992multicolor, durrett1993fixation, liggett1994coexistence}. In a nutshell, meeting the threshold in the \( (n, k) \) game indicates no further actions, while meeting the threshold in the threshold voter model implies altering one’s opinion. There are several ways to mimic others. For instance, an agent may alter its opinion if the opposing opinion becomes the majority among its social neighbors~\cite{li2024imitation}. In continuous opinion space, the opinion distance reflects the extent of difference in opinion between agents~\cite{mHK,mHK2,lanchier2020probability,lanchier2022consensus}. On the contrary, in spin opinion space, the opinion space is finite, and the relationship between agents' opinions is either identical or opposite.

The game is \emph{synchronous} if all agents update their opinions at each time step, and \emph{asynchronous} if only one agent, uniformly selected at random, updates its opinion at each time step. In this paper, under both the synchronous \((n, k)\) game and the asynchronous \((n, k)\) game, as well as with homogeneous or heterogeneous agents, we investigate
\begin{itemize}
    \item whether a decision can be made in finite time, and
    \item how fast a decision can be made.
\end{itemize}
Denote \( x_i(t) \sim \text{Bernoulli}(p) \) to indicate that \( x_i(t) \) is a Bernoulli random variable with \( P(x_i(t) = 1) = p \), meaning that \( x_i(t) = 1 \) with probability \( p \). An agent is a \emph{rejector} if \( x_i(t) = 0 \) at all times, a \emph{consentor} if \( x_i(t) = 1 \) at all times, and a \emph{neutralist} if \( x_i(t) \sim \text{Bernoulli}(1/2) \) at all times. With an initial opinion distributed as Bernoulli(1/2), an agent is called a \emph{random follower} if it emulates one of its social neighbors, selected uniformly at random, at the next time step; a \emph{majority follower} if it imitates the majority opinion among its social neighbors at the next time step; and a \emph{minority follower} if it mimics the minority opinion among its social neighbors at the next time step. A \emph{state} of all individuals refers to the opinion distribution among all individuals. For instance, \( (x_i(t))_{i \in [n]} \) represents the state of all agents at time \( t \). We can interpret the social relationships using an undirected simple graph, where a vertex represents an individual and an edge symbolizes the existing social connection between the corresponding agents. In the $(n, k)$ game, we assume that:
\begin{itemize}
    \item the social graph is complete, i.e., all agents are socially connected to each other.
\end{itemize}
For instance, Figure~\ref{fig:K5} shows the complete graph of order $5$. The set of social neighbors of agent $i$ is $[5] \setminus \{i\}$.
\begin{figure}[h!]
    \centering
    \includegraphics[width=5cm]{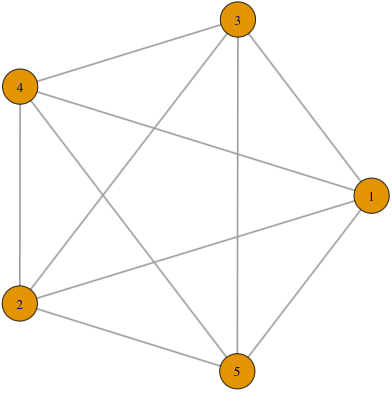}
    \caption{Complete graph of order 5}
    \label{fig:K5}
\end{figure}

\section{Main results}
In the asynchronous $(n,k)$ game consisting of \(n_r\) rejectors and \(n - n_r\) random followers, it is clear that a decision cannot be made in finite time if the threshold \(k > n - n_r\). We derive a nontrivial upper bound for the probability of a decision being made in finite time when the threshold \(k \leq n - n_r\).

\begin{theorem}\label{Thm:random follower}
    In the asynchronous \((n,k)\) game consisting of \(n_r\) rejectors and \(n - n_r\) random followers, the probability of a decision being made in finite time is at most \((n - n_r) / (2k)\) for the threshold \(k \leq n - n_r\).
\end{theorem}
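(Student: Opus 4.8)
The plan is to track the total number of agents holding opinion $1$, namely $S(t) := \sum_{i \in [n]} x_i(t)$, and to show that $(S(t))_{t \ge 0}$ is a nonnegative supermartingale; a maximal inequality then controls the probability that $S$ ever reaches the threshold. Write $m := n - n_r$ for the number of random followers. Since every rejector holds opinion $0$ at all times, $S(t)$ is exactly the number of random followers currently holding opinion $1$, so $S(t) \in \{0, 1, \dots, m\}$, and a decision is made in finite time precisely on the event $\{\, S(t) \ge k \text{ for some } t \ge 0 \,\} = \{\sup_{t \ge 0} S(t) \ge k\}$ (the two descriptions coincide because $S$ is integer-valued and $k$ is an integer).

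The first and central step is to compute $\E[S(t+1) \mid \mathcal F_t]$, where $\mathcal F_t$ is the natural filtration. At step $t+1$ a uniformly chosen agent updates, and $S$ changes only when the chosen agent is a random follower $i$; in that case $i$ copies a uniformly chosen neighbor $j \in [n] \setminus \{i\}$, so $S(t+1) - S(t) = x_j(t) - x_i(t)$. On the complete graph $\sum_{j \ne i} x_j(t) = S(t) - x_i(t)$, hence $\E[\,S(t+1) - S(t) \mid \mathcal F_t,\ i \text{ chosen}\,] = \tfrac{S(t) - x_i(t)}{n-1} - x_i(t)$. Averaging over the probability $1/n$ that each random follower is the one selected and using $\sum_{i \text{ a random follower}} x_i(t) = S(t)$ inside the sum, the contributions of the individual $x_i(t)$ recombine into multiples of $S(t)$ and one is left with
\[
\E[S(t+1) \mid \mathcal F_t] \;=\; \Bigl(1 - \frac{n_r}{n(n-1)}\Bigr) S(t) \;\le\; S(t),
\]
so $(S(t))$ is a nonnegative supermartingale (a martingale exactly when $n_r = 0$). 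I expect this bookkeeping — correctly separating the rejectors' contribution from the followers' — to be the only genuine obstacle; everything else is routine.

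With the supermartingale property in hand, let $\tau := \inf\{t \ge 0 : S(t) \ge k\}$. Optional stopping applied to the bounded time $\tau \wedge t$ gives $\E[S(0)] \ge \E[S(\tau \wedge t)] \ge k\, P(\tau \le t)$, since $S \ge 0$ and $S(\tau) \ge k$ on $\{\tau \le t\}$; letting $t \to \infty$ (a form of Doob's maximal inequality for nonnegative supermartingales) yields $P(\tau < \infty) \le \E[S(0)]/k$. Finally, each random follower has initial opinion $\text{Bernoulli}(1/2)$ while each rejector contributes $0$, so $\E[S(0)] = m/2 = (n - n_r)/2$. Therefore the probability that a decision is made in finite time equals $P(\tau < \infty) \le (n - n_r)/(2k)$, which is the assertion; the hypothesis $k \le n - n_r$ is the range in which $\tau < \infty$ is possible at all, since $S(t) \le n - n_r$ always (for $k > n - n_r$ the event is empty and the statement is trivial).
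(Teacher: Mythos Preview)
Your proof is correct and follows essentially the same route as the paper: both arguments rest on showing that the opinion count $S(t)=Z_t$ is a nonnegative supermartingale (your conditional computation $\E[S(t+1)\mid\mathcal F_t]=(1-\tfrac{n_r}{n(n-1)})S(t)$ is exactly the paper's Lemma~4, stated more precisely) and then applying optional stopping together with $\E[S(0)]=(n-n_r)/2$. The only cosmetic difference is that the paper stops at the absorption time $T$ and uses the dichotomy $Z_T\in\{0\}\cup[k,\infty)$, whereas you stop at the first hitting time of level $k$ and invoke the maximal inequality directly; the latter is slightly cleaner since it avoids appealing to the Markov-chain classification of absorbing states.
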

If \( n_r \approx n/2 \) and \( k \approx n/2 \), then
\[
\frac{n - n_r}{2k} \approx \frac{n/2}{n} = \frac{1}{2}.
\]
In this case, the probability of a decision being made in finite time is at most \( 1/2 \) as $n\to\infty$. In other words, the probability of a decision not being made in finite time is at least \( 1/2 \) as $n\to\infty$. In the asynchronous $(n,k)$ game consisting of \(n_r\) rejectors, \(n_c\) consentors, and at least two majority followers, it is clear that a decision can be made in finite time when the threshold \(k \leq n_c\) and cannot be made in finite time when the threshold \(k > n - n_r\). We obtain a nontrivial upper bound for the probability of a decision not being made in finite time when the threshold \(n_c < k \leq n - n_r\).

\begin{theorem}\label{Thm:majority follower}
    In the asynchronous $(n, k)$ game consisting of \(n_r\) rejectors, \(n_c\) consentors and at least two majority followers with the threshold \(n_c < k \leq n - n_r\), the probability of a decision not being made in finite time is at most
$$
\frac{(n - n_c - n_r)(n + n_c + n_r - 1) + 4n_c n_r }{4n_c(n - n_c)}.
$$
\end{theorem}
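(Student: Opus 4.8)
The plan is to track the one-dimensional process $S(t)$ counting the majority followers holding opinion $1$ at time $t$, writing $m:=n-n_c-n_r\ge 2$ for their number; then $\sum_{i\in[n]}x_i(t)=n_c+S(t)$, so a decision is made exactly when $S(t)\ge k-n_c$ for some $t$. Since the event ``no decision'' is $\{\sup_t S(t)\le k-n_c-1\}$, which is nondecreasing in $k$, it suffices to treat the largest admissible threshold $k=n-n_r$, for which ``no decision'' is precisely $\{S(t)\neq m\ \text{for all }t\}$.

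Next I would use completeness of the social graph: when a majority follower updates it sees all $n-1$ other agents, hence copies the global majority opinion (with its own vote discounted). This makes $S$ a quasi-monotone chain: there is a threshold value $\mu$ (essentially $(n-2n_c)/2$) such that on $\{S>\mu\}$ every update can only raise $S$ (a $0$-follower turns $1$, a $1$-follower stays $1$), so $S$ reaches $m$ almost surely; symmetrically on $\{S<\mu\}$ every update can only lower $S$, so $S$ reaches $0$ almost surely; and only when $n$ is even does the level $S=\mu$ admit a single symmetric step, up with probability $(\text{number of }0\text{-followers})/m$ and down otherwise, after which $S$ is trapped. Hence $S_\infty:=\lim_t S(t)$ exists a.s.\ and lies in $\{0,m\}$, with $P(\text{decision})=P(S_\infty=m)$. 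To make this quantitative, the function $h$ equal to $0$ on $\{0,\dots,\mu-1\}$, to $(m-\mu)/m$ at $\mu$, and to $1$ on $\{\mu+1,\dots,m\}$ is harmonic for the chain, so $h(S(t))$ is a bounded martingale with $h(S_\infty)=\indicator{S_\infty=m}$; optional stopping gives $P(\text{decision})=\E[h(S(0))]$. As the majority followers start i.i.d.\ $\mathrm{Bernoulli}(1/2)$, $S(0)\sim\mathrm{Binomial}(m,1/2)$, so $\E[h(S(0))]$ is an explicit truncated-binomial expectation and in particular $P(\text{decision})\ge P(S(0)\ge\mu+1)$.

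It remains to lower bound this binomial probability. Putting the claimed bound over the common denominator $4n_c(n-n_c)=n^2-4\mu^2$ shows it equals $1-\dfrac{m(m+1-4\mu)}{n^2-4\mu^2}$, so the target is $\E[h(S(0))]\ge \dfrac{m(m+1-4\mu)}{n^2-4\mu^2}$. I would obtain this from a second-moment estimate for $S(0)$: using $\E S(0)=m/2$ and $\mathrm{Var}\,S(0)=m/4$, a one-sided Chebyshev (Cantelli) or a Paley--Zygmund bound yields a rational lower bound for $P(S(0)\ge\mu+1)$, after which the desired inequality reduces to polynomial algebra in $n,n_c,n_r$; when $m+1-4\mu\le 0$ (equivalently $n_c$ too small relative to $n_r$ and $m$) the claimed bound is $\ge 1$ and nothing is needed, which conveniently confines the algebra to the regime where it is true.

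The two places I expect real work are, first, pinning down the threshold dynamics — the tie-breaking convention and the parity of $n$ (for odd $n$ the up- and down-regions partition $\{0,\dots,m\}$ with no random-walk level, while for even $n$ there is exactly one), so that $S_\infty\in\{0,m\}$ and the escape probability at $\mu$ are justified — and, second, the closing algebraic verification that the second-moment lower bound on $P(S(0)\ge\mu+1)$ dominates $m(m+1-4\mu)/(n^2-4\mu^2)$ throughout the relevant parameter range, which is where the precise rational form of the stated bound is matched and where the two parity cases must be handled separately. The conceptual core — reduce to $k=n-n_r$, exploit the complete graph to get a one-dimensional quasi-monotone chain, and read off $P(\text{decision})=\E[h(S(0))]$ — is comparatively routine.
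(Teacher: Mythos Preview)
Your route is genuinely different from the paper's. The paper never reduces to $k=n-n_r$ and never analyzes the one–dimensional chain $S(t)$. It works instead with the disagreement count $W_t=\sum_{i,j}\indicator{x_i(t)\neq x_j(t)}$, shows (Lemma~\ref{lemma:W_t}) that $(W_t)$ is a bounded non-increasing process when the non-fixed agents are majority followers, and shows (Lemma~\ref{lemma:opinion 0 in MF}) that on the event ``no decision'' every majority follower is $0$ at the absorption time $T$, whence $W_T=2n_c(n-n_c)$. Optional stopping then gives
\[
2n_c(n-n_c)\,P(\text{no decision})\le \E[W_T]\le \E[W_0],
\]
and the stated bound is literally $\E[W_0]/\bigl(2n_c(n-n_c)\bigr)$ with $\E[W_0]$ computed from the Bernoulli$(1/2)$ initialization. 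No parity split, no tie-breaking discussion, no second-moment inequality.

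The two arguments are closely linked: since $W_t=2(n_c+S(t))(n-n_c-S(t))$, the paper's Lyapunov function is just a concave quadratic of your $S$. Your plan is correct through the harmonic-function step (and indeed yields the \emph{exact} value of $P(S_\infty=m)$ for $k=n-n_r$), but the finish you propose is the weak point. Passing from $\E[h(S(0))]$ to $P(S(0)\ge\mu+1)$ already discards the term $\tfrac{m-\mu}{m}P(S(0)=\mu)$, and Cantelli/Paley--Zygmund then produce a rational lower bound of a different shape; you have not checked that it dominates $m(m+1-4\mu)/(n^2-4\mu^2)$ over the whole admissible range, and there is no reason it should do so without loss. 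What \emph{does} close your argument cleanly is the pointwise inequality
\[
1-h(s)\ \le\ \frac{(n_c+s)(n-n_c-s)}{n_c(n-n_c)}\qquad(0\le s\le m),
\]
which holds because the right side is $\ge 1$ on $0\le s\le 2\mu$ (the parabola peaks at $s=\mu$) and the left side is $\le 1$ everywhere and $0$ for $s\ge\mu+1$; taking expectations recovers exactly the paper's bound. But recognizing this is essentially rediscovering the $W_t$-supermartingale, so the paper's approach is the shorter and more robust one, while yours buys the exact absorption probability at the cost of the parity/tie-breaking casework and an unfinished final comparison.
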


If \( n_r = 0 \) and \( n_c \approx n/2 \), then the probability of a decision not being made in finite time is at most \( 3/4 \) as \( n \to \infty \). Namely, the probability of a decision being made in finite time is at least \( 1/4 \) as \( n \to \infty \).

\section{Properties of the game with heterogeneous agents}
Suppose the group of all agents consists of rejectors and agents with a positive probability of holding opinion 1 at all times. In that case, a decision can almost surely be made in finite time if and only if the number of non-rejectors is at least \( k \). Note that a consentor's opinion can be viewed as Bernoulli\((1)\).

\begin{lemma}\label{lemma: nonrejector}
    In the synchronous \((n, k)\) game with \( n_r \) rejectors, let \( [n-n_r] \) consist of the remaining agents, each satisfying \( x_i(t) \sim \text{Bernoulli}(p_i) \) with \( p_i > 0 \) for all \( t \geq 0 \). Then, a decision can almost surely be made in finite time if and only if \( k \leq n-n_r \).
\end{lemma}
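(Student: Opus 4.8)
The plan is to prove the two implications separately; necessity is immediate and essentially all the work is in sufficiency.

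For necessity (``only if'') I would argue by contraposition: if $k > n - n_r$, then at every time $t$ the $n_r$ rejectors contribute $0$ and the other $n - n_r$ agents contribute at most $1$ apiece, so $\sum_{i \in [n]} x_i(t) \le n - n_r < k$ for every realization and every $t \ge 0$; hence $P\big(\sum_{i \in [n]} x_i(t) \ge k \text{ for some } t \ge 0\big) = 0$, so a decision cannot almost surely (indeed, cannot ever) be made. Therefore almost-sure finite-time decidability forces $k \le n - n_r$.

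For sufficiency (``if''), assume $k \le n - n_r$, and for $t \ge 0$ set $A_t = \{\, x_i(t) = 1 \text{ for all } i \in [n - n_r]\,\}$. On $A_t$ one has $\sum_{i \in [n]} x_i(t) = n - n_r \ge k$, so a decision is made at time $t$; hence it suffices to show $P\big(\bigcup_{t \ge 0} A_t\big) = 1$. The crux is the independence structure of the non-rejectors: a consentor's opinion is the constant $1$, and each neutralist resamples an independent $\mathrm{Bernoulli}(1/2)$ at every step, so the vectors $(x_i(t))_{i \in [n - n_r]}$ for $t = 0, 1, 2, \dots$ are independent and identically distributed with $P(A_t) = \prod_{i \in [n-n_r]} p_i =: \delta$, and $\delta > 0$ since each $p_i > 0$ and the product is finite. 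Because the $A_t$ are independent, $P(\text{no decision by time } T) \le P\big(\bigcap_{t=0}^{T} A_t^{c}\big) = (1 - \delta)^{T+1} \to 0$ as $T \to \infty$, which yields $P\big(\bigcup_{t} A_t\big) = 1$.

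The one delicate ingredient — and what I expect to be the main point to state cleanly — is the independence invoked in the sufficiency step, both across agents at a fixed time and across distinct times. It cannot simply be dropped: under the bare marginal hypothesis $x_i(t) \sim \mathrm{Bernoulli}(p_i)$ alone, two agents with $p_i = 1/2$ whose opinions are coupled so as to disagree at every step never reach the all-ones state, so a threshold $k = 2 = n - n_r$ is never met although every $p_i > 0$. In the model at hand the ``remaining agents'' are consentors and neutralists, which supply exactly the needed independence, so I would spell that out explicitly. If one wanted the lemma for agent types whose opinions may be correlated in time, the clean substitute is to check that, conditionally on the history through time $t - 1$, one has $P(A_t \mid \text{history}) \ge \delta$ for a fixed $\delta > 0$, and then apply the conditional Borel--Cantelli lemma in place of the elementary product estimate.
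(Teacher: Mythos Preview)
Your proof is correct. The paper's argument is briefer and is phrased in Markov-chain language: it declares the states with $\sum_i x_i \ge k$ to be absorbing, notes that from any non-absorbing state the one-step transition to an absorbing state has positive probability, and then invokes the standard fact that in a finite chain such non-absorbing states are transient. Your direct estimate $P\big(\bigcap_{t\le T} A_t^c\big) = (1-\delta)^{T+1}$ with $\delta = \prod_{i\in[n-n_r]} p_i$ is the same mechanism unpacked, trading the appeal to Markov-chain theory for an elementary product bound; in particular the paper's ``positive transition probability'' is exactly your $\delta>0$. What your route buys is explicitness about the joint hypothesis: the paper silently uses that the vectors $(x_i(t))_{i\in[n-n_r]}$ are i.i.d.\ across $t$ (so that the chain is well-defined and the one-step probability of hitting an absorbing state is uniformly bounded below), whereas you isolate this assumption, supply the coupled-$p_i=1/2$ counterexample showing the bare marginal statement is insufficient, and point to conditional Borel--Cantelli as the fix for time-correlated variants. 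That observation is a genuine refinement; neither the paper's argument nor yours goes through without some such joint assumption on the non-rejectors.
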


\begin{proof}
    It is clear that a decision cannot be made if \( n - n_r < k \). When \( n - n_r \geq k \), the absorbing states are those where the number of agents holding opinion 1 is at least \( k \). Since the transition from non-absorbing states to absorbing states has a positive probability, the non-absorbing states are transient. By the theory of Markov chains, all states converge to an absorbing state in finite time.
\end{proof}

Next, we investigate how fast a decision can be made. Considering the synchronous game consisting of $n_r$ rejectors, $n_c$ consentors and $n_n$ neutralists, let $T$ be the earliest time that a decision is made. It is clear that $T=0$ if $n_c\geq k.$  For $n_c<k\leq n-n_r$, the problem reduces to how fast threshold $k-n_c$ can be reached among $n_n$ neutralists. It turns out that for large $n_n$, $T$ is a geometric random variable with success probability $p = 1 - \Phi\left(\frac{k - n_c - n_n/2}{\sqrt{n_n}/2}\right)$, where $\Phi(z) = P(\text{N}(0,1) \leq z)$ for $\text{N}(0,1)$ a standard normal random variable, i.e., a normal random variable with mean 0 and variance 1. So the expected value of $T$, $$\E [T]=\sum_{k\geq 0}kp(1-p)^k=\frac{1-p}{p}.$$ This means that the average number of rounds to decide is \( \E[T] + 1 = \frac{1}{p} \) for large \( n_n \). The average number of rounds to decide is 2 for large \( n_n \) when \( n_c = 0 \) and \( k \approx \frac{n_n}{2} \), i.e., when there are zero consentors and the threshold \( k \) approximates half of the neutralists. Similarly, in Lemma~\ref{lemma: nonrejector}, we can estimate that the average number of rounds to decide is between \( 1/p_{\max} \) and \( 1/p_{\min} \) for large \( n-n_r \), where
\begin{align*}
    p_{\max} &= 1 - \Phi\left(\frac{k - (n-n_r)\max_{i\in [n-n_r]}p_i}{\sqrt{(n-n_r)\max_{i\in [n-n_r]}p_i(1-\max_{i\in [n-n_r]}p_i)}}\right),\\
    p_{\min} &= 1 - \Phi\left(\frac{k - (n-n_r)\min_{i\in [n-n_r]}p_i}{\sqrt{(n-n_r)\min_{i\in [n-n_r]}p_i(1-\min_{i\in [n-n_r]}p_i)}}\right).
\end{align*}
Thus, we derive the following lemma.

\begin{lemma}
    In the synchronous \((n, k)\) game with \( n_r \) rejectors, let \( [n-n_r] \) consist of the remaining agents, each satisfying \( x_i(t) \sim \text{Bernoulli}(p_i) \) with \( p_i > 0 \) for all \( t \geq 0 \). For \( k \leq n-n_r \), the average number of rounds to decide is between \( 1/p_{\max} \) and \( 1/p_{\min} \) for large \( n-n_r \).
\end{lemma}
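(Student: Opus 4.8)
The plan is to first reduce the statement to a one-round estimate. Because every non-rejector $i\in[n-n_r]$ has $x_i(t)\sim\mathrm{Bernoulli}(p_i)$ at each time $t$ with the updates across distinct times independent, the totals $S(t):=\sum_{i\in[n]}x_i(t)=\sum_{i\in[n-n_r]}x_i(t)$ are i.i.d.\ in $t$, each being a sum of independent Bernoulli variables. Hence the decision time $T=\min\{t\ge 0:S(t)\ge k\}$ is geometric with success probability $q:=P(S(0)\ge k)$, and, exactly as in the paragraph preceding the lemma, the average number of rounds to decide is $\E[T]+1=1/q$. So it suffices to show that $q$ lies (asymptotically, as $n-n_r\to\infty$) between $p_{\min}$ and $p_{\max}$.

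For this I would sandwich $S(0)$ by stochastic domination. Write $p_*=\min_{i\in[n-n_r]}p_i$ and $p^*=\max_{i\in[n-n_r]}p_i$, and couple the opinions at time $0$ to a single i.i.d.\ family of $\mathrm{Uniform}[0,1]$ variables $U_i$ by $x_i(0)=\indicator{U_i<p_i}$; then $\indicator{U_i<p_*}\le x_i(0)\le\indicator{U_i<p^*}$ surely, so $B_*\le S(0)\le B^*$ surely with $B_*\sim\mathrm{Binomial}(n-n_r,p_*)$ and $B^*\sim\mathrm{Binomial}(n-n_r,p^*)$. Thus $P(B_*\ge k)\le q\le P(B^*\ge k)$. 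Now the De Moivre--Laplace theorem (or Berry--Esseen for an explicit rate) gives, for large $n-n_r$,
\begin{align*}
P(B^*\ge k)&\approx 1-\Phi\!\left(\frac{k-(n-n_r)p^*}{\sqrt{(n-n_r)p^*(1-p^*)}}\right)=p_{\max},\\
P(B_*\ge k)&\approx 1-\Phi\!\left(\frac{k-(n-n_r)p_*}{\sqrt{(n-n_r)p_*(1-p_*)}}\right)=p_{\min},
\end{align*}
so $p_{\min}\lesssim q\lesssim p_{\max}$ and therefore $1/p_{\max}\lesssim 1/q\lesssim 1/p_{\min}$, i.e.\ the average number $1/q$ of rounds to decide lies between $1/p_{\max}$ and $1/p_{\min}$. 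A one-line computation showing that $p\mapsto 1-\Phi\big((k-Np)/\sqrt{Np(1-p)}\big)$ is increasing on $(0,1)$ whenever $k\le N$ (its derivative has the sign of $-Np-k(1-2p)<0$) confirms $p_{\min}\le p_{\max}$, so the target interval is genuinely an interval.

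I expect the main delicate point to be honesty about the word ``between'': the claim is inherently an approximation valid only as $n-n_r\to\infty$, since $P(B^*\ge k)$ equals $p_{\max}$ only in the limit, and then only when $k$ scales like $(n-n_r)p^*+O(\sqrt{n-n_r})$ --- if $k$ is of smaller or larger order both sides collapse to $0$ or $1$ and the assertion holds trivially. A secondary subtlety, which is precisely why the argument routes through the coupling rather than applying a Lindeberg central limit theorem to $S(0)$ directly, is that $p\mapsto p(1-p)$ is not monotone, so $\sum_i p_i(1-p_i)$ need not lie between $(n-n_r)p_*(1-p_*)$ and $(n-n_r)p^*(1-p^*)$; the variance of $S(0)$ cannot be sandwiched naively, whereas $S(0)$ itself can.
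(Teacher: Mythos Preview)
Your proposal is correct and follows the same approach as the paper: recognize the waiting time as geometric, then sandwich the per-round success probability between that of homogeneous $\mathrm{Bernoulli}(p_*)$ and $\mathrm{Bernoulli}(p^*)$ populations and invoke the normal approximation. The paper's own justification is only the sentence ``Similarly, \ldots we can estimate \ldots'' in the paragraph preceding the lemma, so your explicit coupling and your caveats about the asymptotic nature of the claim supply rigor that the paper itself does not.
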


Next, we study the agent formation consisting of \(n_r\) rejectors, \(n_c\) consentors and \(n - n_r - n_c\) random followers. It is clear that a decision can be made if \(n_c \geq k\) and cannot be made if \(n - n_r < k\). Thus, we focus on the case where \(n_c < k \leq n - n_r\). In this scenario, the presence of a consentor ensures that a decision can almost surely be made in finite time.

\begin{lemma}
    In the asynchronous $(n,k)$ game with $n_r$ rejectors and $n_c$ consentors, let $[n-n_r-n_c]$ consist of the remaining agents who are random followers. For $1\leq n_c<k\leq n-n_r$, a decision can almost surely be made in finite time.
\end{lemma}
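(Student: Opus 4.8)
The plan is to treat the process as a finite Markov chain and run a standard ``uniformly positive probability of success in bounded time'' argument, with the single consentor serving as a permanently available source of opinion $1$ to be copied.

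First I would fix notation: write $m := n - n_r - n_c$ for the number of random followers. Since $n_c < k \le n - n_r$ we have $1 \le k - n_c \le m$, so $m \ge 1$ and it is in principle possible for the state to contain at least $k$ ones (make any $k - n_c$ of the random followers hold opinion $1$). Because the rejectors and consentors are frozen, the dynamics reduce to a time-homogeneous Markov chain on the finite set $S = \{0,1\}^m$ of random-follower configurations, and a decision is made at time $t$ precisely when the configuration lies in the nonempty set $A := \{s \in S : \#\{i : s_i = 1\} \ge k - n_c\}$. Let $\tau := \inf\{t \ge 0 : x(t) \in A\}$; the goal is to show $P(\tau < \infty) = 1$.

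The key step I would prove is a uniform reachability bound: there is $\varepsilon > 0$ with $P(\tau \le m \mid x(0) = s) \ge \varepsilon$ for every $s \notin A$. To obtain it I would exhibit an explicit favorable run of updates. Whenever the current configuration is outside $A$, fewer than $k - n_c \le m$ of the $m$ random followers hold opinion $1$, so at least one of them holds $0$; with probability $1/n$ the asynchronous dynamics selects such a follower, and since $n_c \ge 1$ at least one of its $n - 1$ neighbors is a consentor, which it copies with probability $1/(n-1)$, turning its opinion to $1$. Each such step increases the number of $1$'s among the random followers by exactly one, so after at most $k - n_c \le m$ of them the configuration lies in $A$; forcing the first $k - n_c$ steps to be of this type yields $\varepsilon := (n(n-1))^{-m} > 0$, uniformly in $s \notin A$. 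Applying the Markov property at the times $m, 2m, 3m, \dots$, on the event $\{\tau > jm\}$ the conditional chance that $\tau \le (j+1)m$ is at least $\varepsilon$, so $P(\tau > Nm) \le (1 - \varepsilon)^N \to 0$ as $N \to \infty$, giving $P(\tau = \infty) = 0$ (and in fact $\E[\tau] < \infty$ by the geometric tail).

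I expect the only real obstacle to be the bookkeeping in the uniform reachability step: checking that the number of favorable updates needed is bounded by $m$ regardless of the starting configuration, and—crucially—that a consentor is always present to be copied. This last point is exactly where the hypothesis $n_c \ge 1$ enters, and it is the feature that distinguishes this lemma from the rejector/random-follower setting of Theorem~\ref{Thm:random follower}, where no opinion $1$ is guaranteed and a decision may fail to be reached.
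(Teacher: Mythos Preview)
Your argument is correct and is essentially the same approach as the paper's: identify the decision configurations as the only absorbing class and show every other state can reach it, so the finite Markov chain is absorbed almost surely. The paper compresses all of this into one sentence (``all other states are transient''), whereas you spell out the reachability step explicitly via the consentor-copying path and then run the block/geometric argument; the extra work buys you the finite-expectation conclusion $\E[\tau]<\infty$, which the paper does not state.
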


\begin{proof}
    The absorbing states are those where the number of agents holding opinion 1 is at least \( k \). Since all other states are transient, according to the Markov chains theory, a decision can almost surely be made in finite time.
\end{proof}

When there is no consentor, the state where all agents hold opinion 0 becomes an additional absorbing state. Thus, a decision cannot almost surely be made in finite time. We construct a supermartingale to derive the probability of a decision being made in finite time.

\begin{lemma}\label{lemma:supermartingale}
    In the asynchronous \((n,k)\) game consisting of \(n_r\) rejectors and \(n - n_r\) random followers, let \( Z_t = \sum_{i \in [n]} x_i(t) \). Then,
\[
\mathbb{E} [Z_{t+1} - Z_t] = -\frac{n_r \E[Z_t]}{n(n-1)},
\]
so \( (Z_t)_{t \geq 0} \) is a supermartingale.
\end{lemma}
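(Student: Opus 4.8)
The plan is to compute $\E[Z_{t+1} - Z_t \mid (x_i(t))_{i\in[n]}]$ directly by conditioning on which agent is selected to update in the asynchronous step. In the asynchronous $(n,k)$ game an agent $j$ is chosen uniformly from $[n]$ with probability $1/n$. If $j$ is a rejector, then $x_j(t+1) = x_j(t) = 0$, so $Z$ does not change. If $j$ is a random follower, it copies the opinion of a uniformly chosen social neighbor; since the social graph is complete, that neighbor is uniform over $[n]\setminus\{j\}$. In that case $x_j(t+1) = x_\ell(t)$ for a uniformly chosen $\ell \neq j$, so the conditional expectation of the new value of $x_j$ is $(Z_t - x_j(t))/(n-1)$, and hence the conditional expected increment contributed by selecting $j$ is $(Z_t - x_j(t))/(n-1) - x_j(t)$.

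The next step is to average over the choice of $j$. Summing $\tfrac{1}{n}$ times the increment over the $n - n_r$ random followers $j$ (the rejectors contribute zero), I get
\[
\E[Z_{t+1} - Z_t \mid (x_i(t))_i] = \frac{1}{n}\sum_{j \text{ r.f.}} \left( \frac{Z_t - x_j(t)}{n-1} - x_j(t) \right).
\]
Here I would use that $\sum_{j \text{ r.f.}} x_j(t) = Z_t$, because rejectors and consentors are absent among the non-random-follower agents — only rejectors and random followers are present, and rejectors contribute $0$ to $Z_t$. Thus $\sum_{j \text{ r.f.}} x_j(t) = \sum_{i\in[n]} x_i(t) = Z_t$, and there are $n - n_r$ such terms in $\sum_{j} Z_t$. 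Substituting gives
\[
\E[Z_{t+1} - Z_t \mid (x_i(t))_i] = \frac{1}{n}\left( \frac{(n-n_r)Z_t - Z_t}{n-1} - Z_t \right) = \frac{1}{n}\left( \frac{(n - n_r - 1)Z_t}{n-1} - Z_t \right),
\]
which simplifies to $-\frac{n_r Z_t}{n(n-1)}$. Taking total expectation yields the claimed formula. Since $Z_t \geq 0$ and $n_r \geq 0$, the increment has nonpositive expectation, so $(Z_t)_{t\geq 0}$ is a supermartingale; it is adapted and bounded (by $n$), so integrability is immediate.

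I do not expect a serious obstacle here; the only point requiring care is the bookkeeping identity $\sum_{j \text{ random follower}} x_j(t) = Z_t$, which relies on the composition of the population being exactly $n_r$ rejectors plus $n - n_r$ random followers (rejectors contribute nothing to $Z_t$), and the correct handling of the complete-graph neighbor set $[n]\setminus\{j\}$ of size $n-1$ when computing the copied-opinion expectation. Everything else is a routine linear computation.
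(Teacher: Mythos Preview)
Your proof is correct and follows essentially the same approach as the paper: a direct computation of the one-step conditional expected increment $\E[Z_{t+1}-Z_t\mid (x_i(t))_i]$, then simplification to $-n_r Z_t/(n(n-1))$. The only cosmetic difference is that the paper groups the computation by the sign of the increment (probability of a $+1$ step, $\frac{n-Z_t-n_r}{n}\cdot\frac{Z_t}{n-1}$, minus probability of a $-1$ step, $\frac{Z_t}{n}\cdot\frac{n-Z_t}{n-1}$), whereas you group it by which agent is selected; the algebra collapses to the same expression.
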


\begin{proof}
    Observe that $$\E [Z_{t+1}-Z_t]=\E\left[ \frac{n-Z_t-n_r}{n}\frac{Z_t}{n-1}-\frac{Z_t}{n}\frac{n-Z_t}{n-1}\right]=-\frac{n_r \E[Z_t]}{n(n-1)}\leq 0$$ so $(Z_t)_{t\geq 0}$ is a supermartingale.
\end{proof}

Observe that \( Z_t \) is the number of individuals holding opinion 1 at time \( t \). Unlike the voter model, where \( (Z_t) \) is a martingale, \( (Z_t) \) is a supermartingale in the asynchronous \((n,k)\) game consisting of rejectors and random followers. Without rejectors, \( (Z_t) \) is a martingale.

\begin{proof}[\bf Proof of Theorem~\ref{Thm:random follower}]
    Let \( T \) be the earliest time at which no agents alter their opinions afterward, i.e., 
\[ T = \inf\{t \geq 0 : x_i(t) = x_i(s) \text{ for all } i \in [n] \text{ and } s \geq t\}. \]
The absorbing states are those where the number of agents holding opinion 1 reaches the threshold \( k \), and the state where all agents hold opinion 1. All other states are transient, so by the theory of Markov chains, \( T \) is almost surely finite and either \( Z_T \geq k \) or \( Z_T = 0 \). It follows from Lemma~\ref{lemma:supermartingale} that \( (Z_t)_{t \geq 0} \) is a nonnegative supermartingale. By the optional stopping theorem and conditional expectation, we have
\[ \mathbb{E}[Z_T \mid Z_T \geq k] \cdot P(Z_T \geq k) = \mathbb{E}[Z_T] \leq \mathbb{E}[Z_0], \]
so
\[ P(Z_T \geq k) \leq \frac{\mathbb{E}[Z_0]}{\mathbb{E}[Z_T \mid Z_T \geq k]} \leq \frac{(n - n_r) / 2}{k} = \frac{n - n_r}{2k}. \]

\end{proof}

Next, we investigate the agent formations of rejectors, consentors and majority followers, and those of rejectors, consentors and minority followers.

\begin{lemma}\label{lemma:W_t}
    In the asynchronous $(n,k)$ game, let $W_t=\sum_{i,j\in [n]}\mathbbm{1}\{x_i(t)\neq x_j(t)\}$ and $k_t$ be the agent uniformly selected at time $t.$ Then,
    \begin{align*}
        &W_t-W_{t+1}\\
        &\hspace{0.5cm}=2\sum_{j\in [n]-\{k_t\}}\left(\mathbbm{1}\{x_{k_t}(t)\neq x_j(t)\}-\mathbbm{1}\{x_{k_t}(t)= x_j(t)\}\right)\mathbbm{1}\{x_{k_t}(t)\neq x_{k_t}(t+1)\}.
    \end{align*}
\end{lemma}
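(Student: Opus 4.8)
The plan is to exploit the asynchronous nature of the dynamics: between times $t$ and $t+1$ only agent $k_t$ can change its opinion, so $x_j(t+1)=x_j(t)$ for every $j\neq k_t$. First I would isolate the summands of $W_t=\sum_{i,j\in[n]}\mathbbm{1}\{x_i(t)\neq x_j(t)\}$ that are affected: the terms with $i,j\neq k_t$ are unchanged, and the diagonal terms $i=j$ are always $0$, so $W_t-W_{t+1}$ is the contribution of the ordered pairs $(k_t,j)$ and $(j,k_t)$ with $j\neq k_t$. Since $\mathbbm{1}\{x_{k_t}(\cdot)\neq x_j(\cdot)\}=\mathbbm{1}\{x_j(\cdot)\neq x_{k_t}(\cdot)\}$, these two families contribute equally, which is the source of the overall factor $2$, and I obtain
\[
W_t-W_{t+1}=2\sum_{j\in[n]-\{k_t\}}\bigl(\mathbbm{1}\{x_{k_t}(t)\neq x_j(t)\}-\mathbbm{1}\{x_{k_t}(t+1)\neq x_j(t+1)\}\bigr).
\]

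Next I would use $x_j(t+1)=x_j(t)$ for $j\neq k_t$ and split into two cases according to whether $k_t$ actually flips. If $x_{k_t}(t+1)=x_{k_t}(t)$, every summand vanishes and the right-hand side is $0$, consistent with the asserted formula because the factor $\mathbbm{1}\{x_{k_t}(t)\neq x_{k_t}(t+1)\}$ is then $0$. If $x_{k_t}(t+1)\neq x_{k_t}(t)$, then because the opinion space is binary the new opinion is the complement of the old one, so $\mathbbm{1}\{x_{k_t}(t+1)\neq x_j(t)\}=\mathbbm{1}\{x_{k_t}(t)=x_j(t)\}$; substituting this in turns each summand into $\mathbbm{1}\{x_{k_t}(t)\neq x_j(t)\}-\mathbbm{1}\{x_{k_t}(t)=x_j(t)\}$, which is exactly the claimed expression with the flip-indicator equal to $1$.

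Finally, I would merge the two cases by inserting the indicator $\mathbbm{1}\{x_{k_t}(t)\neq x_{k_t}(t+1)\}$, which equals $1$ precisely in the second case and $0$ in the first, yielding the stated identity. I do not expect a genuine obstacle here: the lemma is essentially a bookkeeping computation, and the only points requiring care are correctly accounting for ordered pairs (hence the factor $2$ and the irrelevance of the diagonal) and using the binary structure to rewrite the ``$\neq$'' indicator at time $t+1$ as an ``$=$'' indicator at time $t$.
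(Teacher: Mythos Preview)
Your proposal is correct and follows essentially the same route as the paper: first reduce $W_t-W_{t+1}$ to $2\sum_{j\neq k_t}\bigl(\mathbbm{1}\{x_{k_t}(t)\neq x_j(t)\}-\mathbbm{1}\{x_{k_t}(t+1)\neq x_j(t)\}\bigr)$ using that only $k_t$ moves, then use the binary nature of the opinions to rewrite the second indicator as $\mathbbm{1}\{x_{k_t}(t)=x_j(t)\}$ on the event of a flip. The paper's proof is simply a two-line version of your argument with the intermediate justifications left implicit.
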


\begin{proof}
    Observe that 
    \begin{align*}
        &W_t-W_{t+1}=2\sum_{j\in [n]-\{k_t\}}\left(\mathbbm{1}\{x_{k_t}(t)\neq x_j(t)\}-\mathbbm{1}\{x_{k_t}(t+1)\neq x_j(t)\}\right)\\
        &\hspace{0.5cm}=2\sum_{j\in [n]-\{k_t\}}\left(\mathbbm{1}\{x_{k_t}(t)\neq x_j(t)\}-\mathbbm{1}\{x_{k_t}(t)= x_j(t)\}\right)\mathbbm{1}\{x_{k_t}(t)\neq x_{k_t}(t+1)\}.
    \end{align*}
\end{proof}

Considering the asynchronous \((n,k)\) game with rejectors and consentors, \((W_t)\) is a supermartingale when the remaining agents are majority followers, and a submartingale when the remaining agents are minority followers. It turns out that the finite-time convergence of all agents' opinions holds under these circumstances.

\begin{lemma}\label{lemma:T finite}
    In the asynchronous \((n,k)\) game with rejectors and consentors, let \( T = \inf\{t \geq 0 : x_i(t) = x_i(s) \text{ for all } i \in [n] \text{ and } s \geq t\} \). Then, \( T \) is almost surely finite if 
\begin{itemize}
    \item the remaining agents are majority followers, or
    \item the remaining agents are minority followers.
\end{itemize}
\end{lemma}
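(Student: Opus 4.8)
The plan is to exploit Lemma~\ref{lemma:W_t}: I would show that the disagreement count \(W_t\) is monotone along the dynamics (hence the super/submartingale mentioned above), conclude that \(W_t\) stabilizes, and then argue that once \(W_t\) has frozen, every agent has stopped updating. Since \((x_i(t))_{i\in[n]}\) is a Markov chain on the finite set \(\{0,1\}^n\), it suffices to show the chain almost surely reaches a configuration from which no agent ever changes opinion again.

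For the majority-follower case I would fix a time \(t\), write \(k_t\) for the selected agent, and set \(D_t=\#\{j\neq k_t: x_j(t)\neq x_{k_t}(t)\}\) and \(A_t=n-1-D_t\). Lemma~\ref{lemma:W_t} then reads \(W_t-W_{t+1}=2(D_t-A_t)\,\mathbbm{1}\{x_{k_t}(t)\neq x_{k_t}(t+1)\}\). Rejectors and consentors never update; a majority follower updates (flips) only when the opposite opinion is the strict majority among its \(n-1\) neighbors, i.e.\ \(D_t>A_t\). Hence \(W_t-W_{t+1}\ge 2\) whenever \(k_t\) flips and \(W_t-W_{t+1}=0\) otherwise, so \((W_t)\) is non-increasing, nonnegative, and integer-valued, and is therefore almost surely eventually constant, say equal to \(W_{T_0}\) for all \(t\ge T_0\) with \(T_0\) almost surely finite. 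For every \(t\ge T_0\) the selected agent cannot flip, since a flip would force \(W_{t+1}\le W_t-2\); thus no agent flips after \(T_0\), and \(T\le T_0<\infty\) almost surely.

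The minority-follower case is symmetric: a minority follower flips only when its current opinion is the strict majority among its neighbors, i.e.\ \(A_t>D_t\), so \(W_t-W_{t+1}\le -2\) on a flip and \((W_t)\) is non-decreasing, bounded above (by \(n^2/2\), say), and integer-valued, hence almost surely eventually constant; the same reasoning then gives \(T<\infty\) almost surely.

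The step I expect to require the most care is not the monotonicity of \((W_t)\) but the implication ``\(W_t\) frozen \(\Rightarrow\) no further updates'': it is precisely here that the definitions of majority/minority follower are used, in the guise of the fact that a follower update strictly changes the agreement balance and so cannot occur once \(W_t\) is constant. The one edge case to dispatch is an even split \(D_t=A_t\) of a follower's neighbor set, which can happen only for odd \(n\) and where ``the majority/minority opinion'' needs a tie-breaking convention; under the natural convention that the agent retains its current opinion in a tie, no flip occurs in that case and the monotonicity, hence the whole argument, is unaffected. (Alternatively, one could obtain the stabilization of \((W_t)\) from the martingale convergence theorem applied to the bounded supermartingale, resp.\ submartingale, but the pointwise monotonicity above makes this detour unnecessary.)
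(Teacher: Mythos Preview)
Your argument is correct and follows essentially the same route as the paper: both use Lemma~\ref{lemma:W_t} to show \((W_t)\) is a bounded supermartingale (resp.\ submartingale), deduce that the integer-valued process stabilizes in finite time, and then infer that no follower can flip thereafter. Your version is slightly more elementary in that you use the pathwise monotonicity of \((W_t)\) directly rather than invoking the martingale convergence theorem (which you note as an alternative), and you spell out the ``\(W_t\) frozen \(\Rightarrow\) no flips'' step and the tie-breaking case more explicitly than the paper does.
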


\begin{proof}
    By Lemma~\ref{lemma:W_t}, \( W_t \) and \( -W_t \) are bounded supermartingales when the remaining agents are majority followers and minority followers, respectively. By the martingale convergence theorem, \( W_t \) converges almost surely to some random variable \( W_\infty \) with finite expectation. Since \( W_t - W_{t+1} \in \mathbb{Z} \) converges to 0 as \( t \to \infty \), \( (W_t)_{t \geq 0} \) converges in finite time. Additionally, since each agent in \( [n] \) is selected with positive probability, \( (x_i(t))_{t \geq 0} \) converges in finite time for all \( i \in [n] \).
\end{proof}
Either a decision is made at time \( T \) or it cannot be made in finite time. It turns out that all majority followers hold opinion 0 at time \( T \) when a decision cannot be made in finite time in the asynchronous $(n,k)$ game consisting of \(n_r\) rejectors, \(n_c\) consentors and at least two majority followers with the threshold \(n_c < k \leq n - n_r\). It is clear that a decision can be made when \(k \leq n_c\) and cannot be made when \(k > n - n_r\).

\begin{lemma}\label{lemma:opinion 0 in MF}
    In the asynchronous $(n, k)$ game consisting of \(n_r\) rejectors, \(n_c\) consentors and at least two majority followers with the threshold \(n_c < k \leq n - n_r\), all majority followers hold opinion 0 at time \( T \) when a decision cannot be made in finite time.
\end{lemma}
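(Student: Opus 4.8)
The plan is to argue by contradiction, exploiting the absorbing structure of the chain at time $T$. By Lemma~\ref{lemma:T finite}, $T$ is almost surely finite, so on the event that a decision is never made the chain sits in the state $(x_i(T))_{i\in[n]}$ for every time $s\ge T$. Since at each step every agent is selected with positive probability, each agent is chosen at some time after $T$ almost surely; selecting a rejector or a consentor never alters the state, whereas selecting a majority follower $i$ resets $x_i$ to the majority opinion among $[n]\setminus\{i\}$. Hence the persistence of the state forces, for each majority follower $i$, that at time $T$ at least as many of the $n-1$ agents in $[n]\setminus\{i\}$ hold $x_i(T)$ as hold the opposite opinion. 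I would state this stability property with a weak inequality on purpose, so that it remains valid under any convention for resolving a tie among the $n-1$ neighbours.

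Write $Z_T=\sum_{i\in[n]}x_i(T)$. Since consentors contribute $n_c$ ones and rejectors none, $Z_T=n_c+m_1$, where $m_1$ denotes the number of majority followers holding opinion $1$ at time $T$; because a decision is never made, $Z_T<k$. Suppose, toward a contradiction, that $m_1\ge 1$, and fix a majority follower $i$ with $x_i(T)=1$. If every majority follower holds opinion $1$ at time $T$, then $Z_T=n_c+(n-n_c-n_r)=n-n_r\ge k$, contradicting $Z_T<k$. Otherwise there is a majority follower $j$ with $x_j(T)=0$, and the counting on the complete graph closes the argument: among the $n-1$ neighbours of $i$ there are $Z_T-1$ ones and $n-Z_T$ zeros, so stability of $i$ gives $Z_T-1\ge n-Z_T$, that is $2Z_T\ge n+1$; among the $n-1$ neighbours of $j$ there are $Z_T$ ones and $n-1-Z_T$ zeros, so stability of $j$ gives $n-1-Z_T\ge Z_T$, that is $2Z_T\le n-1$. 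These two inequalities cannot both hold, which is the desired contradiction. Therefore $m_1=0$, i.e. every majority follower holds opinion $0$ at time $T$.

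I expect the step needing the most care to be the first one: turning the definition of $T$ and the fact that each agent is selected infinitely often into the assertion that the configuration at time $T$ is genuinely absorbing, and hence that every majority follower meets the weak majority condition there, no matter how an even split of its $n-1$ neighbours is broken. Once that is granted, the remainder is short and elementary: it is the observation that on a complete graph a stable majority follower holding $1$ together with a stable majority follower holding $0$ would force $2Z_T$ to be simultaneously at least $n+1$ and at most $n-1$, which is impossible, together with the trivial remark that if all majority followers hold $1$ then the threshold $k\le n-n_r$ is already attained.
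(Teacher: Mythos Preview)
Your proof is correct and follows essentially the same contradiction argument as the paper: assume two majority followers with opposite opinions at time $T$, and derive incompatible counting inequalities on the number of $1$'s and $0$'s among their neighbours. Your write-up is in fact more careful than the paper's, making explicit the absorbing property at time $T$, the tie-breaking issue, and the case in which all majority followers hold opinion $1$ (which the paper handles only implicitly via the hypothesis $k\le n-n_r$).
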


\begin{proof}
    Assume by contradiction that two majority followers hold distinct views at time \( T \), say agents \( i \) and \( j \), with opinions 1 and 0 at time \( T \), respectively. Then, 
\begin{align*}
    & \card(\{k \in [n] - \{i,j\} : x_k(T) = 1\}) \geq \lceil (n-1)/2 \rceil, \\
    & \card(\{k \in [n] - \{i,j\} : x_k(T) = 0\}) \geq \lceil (n-1)/2 \rceil.
\end{align*}
So, 
\begin{align*}
    n &= \card(\{k \in [n] - \{i,j\} : x_k(T) = 1\}) + \card(\{k \in [n] - \{i,j\} : x_k(T) = 0\}) + 2 \\
    & \geq 2(n-1)/2 + 2 = n + 1, \ \text{a contradiction}.
\end{align*}
Thus, all majority followers hold opinion 0 at time $T$ if a decision cannot be made in finite time.
\end{proof}

\begin{proof}[\bf Proof of Theorem~\ref{Thm:majority follower}]
    Since \( (W_t) \) is a bounded supermartingale and \( T \) is almost surely finite, by the optional stopping theorem, 
\(
    \mathbb{E}[W_T] \leq \mathbb{E}[W_0].
\)
By Lemma~\ref{lemma:opinion 0 in MF} and conditional expectation,
\[
    \mathbb{E}[W_T \mid W_T = 2(n - n_c)n_c]P(W_T = 2(n - n_c)n_c) \leq \mathbb{E}[W_T] \leq \mathbb{E}[W_0],
\]
where 
\begin{align*}
    \mathbb{E}[W_0] &= 2 \frac{1}{2} \binom{n - n_c - n_r}{2} + 2n_c n_r + 2 \frac{1}{2} n_c(n - n_c - n_r) + 2  \frac{1}{2} n_r(n - n_c - n_r) \\
    &= \frac{1}{2}[(n - n_c - n_r)(n + n_c + n_r - 1) + 4n_c n_r],
\end{align*}
and \( P(W_T = 2n_c(n - n_c)) \) is the probability of a decision not being made in finite time.

Hence,
\[
    P(W_T = 2(n - n_c)n_c) \leq \frac{(n - n_c - n_r)(n + n_c + n_r - 1) + 4n_c n_r}{4(n - n_c)n_c}.
\]
\end{proof}

\section{Statements and Declarations}
\subsection{Competing Interests}
The author is funded by NSTC grant.

\subsection{Data availability}
No associated data was used.

\end{document}